\theoremstyle{plain}
\newtheorem{thm}{Theorem}[section]
\newtheorem{corr}[thm]{Corollary}
\newtheorem{Example}{Example}[section]
\theoremstyle{definition}
\newtheorem{defn}{Definition}[section]
\begin{document}

\setcounter {page}{1}
%---------------Title,Author,Abstract-----------------------------------------------
\title{New types of convergence on time scales}

\author[A. Ghosh and M. Maity]{ Argha Ghosh* and Manojit Maity**\ }
\newcommand{\acr}{\newline\indent}
\maketitle
\address{{*\,}  The Department of Mathematics, The University of Burdwan, Golapbag, Pin-713104, India. Email: buagbu@yahoo.co.in\acr
            {**\,} Assistant Teacher in Mathematics, Boral High School, Kolkata-700154, India. Email: mepsilon@gmail.com\\}

\maketitle
\begin{abstract}
This paper is discussing about the notion of some new types of convergences namely $I$-convergence and $I^*$-convergence of a $\Delta$-measurable function $f$ on time scales $T$ by considering ideal on time scales $T$. This idea is further extended to the notion of statistical convergence of a $\Delta $-measurable function $f$ on time scales $T$.
\end{abstract}
\author{}
\maketitle
{ Key words and phrases :} $\Delta$- measurable function, time scale, $I$-convergence on time scale, $I^*$-convergence on time scale. \\

\textbf {AMS subject classification (2010) :} 26E70, 40A35.  \\

%-------------------------Section 1- Background and introduction-----------------------
\section{\textbf{Introduction}}

 Let $\mathbb R$ be the set of real numbers. By a time scale $ T$, we mean a nonempty closed subset of $\mathbb R$. The forward jump operator $\sigma :T \rightarrow T$  can be defined by $\sigma (t)=inf \left\{s\in T:s>t\right\}$, $t\in T$ and the graininess function $\mu :T\rightarrow [0,\infty)$
 can be defined by $\mu(t)=\sigma(t)-t$. A half open interval on an arbitrary time scale T is given by $[a,b)_T=\left\{t:a\leq t<b\right\}_T$. Open intervals or closed intervals can be defined similarly (see [10]).
 
The notion of the time scales calculus were first considered by Hilger $[11]$ are to fulfill demands that one unify the discrete and continuous cases and to obtain some new notions. This method of calculus is effective in modeling some practical life problems for example one needs both discrete and continuous time variables to modeling prey and predator populations. Also a large numbers of very important functions on time scales have been applied to solve various dynamic equations, the expression of Green's functions of some boundary value problem $[3]$ or oscillation properties of first and second order nonlinear equations (see [7]).

 The idea of statistical convergence was first introduced by Fast [8] and Steinhus [16] independently, as an extension of the notion of ordinary convergence. To fill the gap between time scale and summability theory  C. Turan and O. Duman introduced the concept of statistical convergence on time scales [17] in 2011. So in view of recent applications of time scales in real life problems, it seems very natural to extend the interesting concept of convergence on time scales by using ideals which we mainly do here.

 By using ideals of the set of all natural numbers $\mathbb N$, Kostryko et. al [13] generalized the notion of statistical convergence of a sequence of real number and named as $I$ and $I^*$ convergence. A lot of seminal work has been done since then as can be found in [4, 5, 6, 12, 14]. However, so far, there is no usage of ideals of any set other than the set $\mathbb N$ and $\mathbb{N}\times\mathbb N$ in literature.

 This paper propose a new notion of convergence of a $\Delta$-measurable function by using ideals of time scales and to generate a new research area, the applications of ideals  
of a set other than the set $\mathbb N$ and $\mathbb{N}\times\mathbb N$ will motivate some researcher in future. As an immediate application of this notion of convergence as follows if the tastes of agents on a time scales $T$ are represented by a preference relation on $T$, that is a subset $R$ of $T\times T$ where $(x,y)\in R$ means the agent prefer alternative $x$ to $y$. Similarities of agents can be described by a convergence of a $\Delta$-measurable functions.

%------------------------------Section-2 - Related definitions-------------------------
\section{\textbf{Preliminaries  }}
%------------------------------------introduction to section2-----------------------
First we recall some basic concepts related to time scales and summability theory. We should note that throughout the paper, we consider that $T$ is a time scales satisfying inf $T=t_0>0$ and sup $T=\infty$. 

 Let $\mathcal F_1$ denote the family of all left closed and right open intervals of $T$ of the form $[a,b )_T$. Let $ m_1: \mathcal F_1 \rightarrow[0,\infty)$
be the set function on $\mathcal F_1$ such that $ m_1([a, b)_T ) =b-a$. Then, it is known that $m_1$ is a countably additive measure
on $\mathcal F_1$. Now, the Caratheodory extension of the set function $m_1$ associated with family $\mathcal F_1$ is said to be the Lebesgue $\Delta$-measure
on $T$ and is denoted by $\mu_\Delta$ (see [10, 15]). \\ In this case, it is known that if $a \in T \setminus\left\{max T\right\},$ then the single point set $\left\{a\right\}$ is $\Delta$-measurable and $\mu_\Delta(a)$ = $\sigma(a)-a$. If  $a, b \in T$ and $a\leq b$, then $\mu_\Delta((a, b)_T)$ =$b-\sigma(a)$. If $a, b \in T \setminus\left\{max T\right\}$, $a\leq b$; $\mu_\Delta((a,b ]_T ) = \sigma(b)-\sigma(a)$ and $\mu_\Delta([a, b]_T) = \sigma(b) - a$ (see [10]).
%---------------------------Definition 2.1------------------------------------------
\begin{defn}$[17]$
Let $\Omega $ be a $\Delta$-measurable subset of $T$. Then, for $t\in T$, we define the set $\Omega(t)$ by
\begin{center}
 $\Omega(t)=\left\{s\in[t_0,t]_T: s\in\Omega\right\}$.
\end{center}
In this case, we define the density of $\Omega$ on $T$, denoted by $\delta_T(\Omega)$, as follows: 
\begin{center}
$\delta_T(\Omega):=\underset{t\rightarrow\infty}{\lim}\frac{\mu_\Delta(\Omega(t))}{\mu_\Delta([t_0,t]_T)}$
\end{center}
provided that the above limit exists.
\end{defn}
% --------------------------Definition 2.2-------------------------------------------
\begin{defn}$[17]$
Let $f:T\rightarrow\mathbb R$ be a $\Delta $-measurable function. We say that $f$ is statistically convergent on $T$ to a number $L$ if, for $\epsilon>0$,
\begin{center}
 $\delta_T(\left\{t\in T:\left|f(t)-L\right|\geq \epsilon\right\})=0$
\end{center}
holds.
\end{defn}
%-----------------------------Definition 2.3------------------------------------------
\begin{defn}$[13]$
 Let $X\neq\phi $. A class $ I $ of subsets of X is said to be
an ideal in X provided, $I$ satisfies the conditions:
\\(i)$\phi \in I$,
\\(ii)$ A,B \in I \Rightarrow A \cup B\in I,$
\\(iii)$ A \in I, B\subset A \Rightarrow B\in I$.
\end{defn}
An ideal $I$ in a non-empty set $X$ is called non-trivial if X $\notin I.$
\begin{defn}$[13]$
%...............................def 2.4......................................
 Let $X\neq\phi  $. A non-empty class $\mathbb F $ of subsets of X is
said to be a filter in X provided that:
\\(i)$\phi\notin \mathbb F $,
\\(ii) $A,B\in\mathbb F \Rightarrow A \cap B\in\mathbb F,$
\\(iii)$ A \in\mathbb F, B\supset A \Rightarrow B\in\mathbb F$.
\end{defn}
 %..............................defnition 2.5....................................
\begin{defn}
 Let $I$ be a non-trivial ideal in a non-empty set $ X$.
 Then the class $\mathbb F(I)$$ = \left\{M\subset X : \exists A \in I ~such~~ that ~M = X\setminus A\right\}$
is a filter on X. This filter $\mathbb F(I) $ is called the filter associated with $I$.
\end{defn}
%............................definition 2.6.................................
\begin{defn}$[13]$
A sequence $x=\left\{x_{k}\right\}_{k\in \mathbb{N}}$ of real numbers is said to converge to $\eta\in \mathbb{R}$ with respect to the ideal $I$, if for every $\epsilon > 0$ the set $A(\epsilon)=\left\{k:\left|x_k-\eta\right|\geq \epsilon\right\}\in I.$
\end{defn}
%-----------------------------------definition 2.7--------------------------------------
\begin{defn}$[13]$
A sequence $x=\left\{x_k\right\}_{k\in \mathbb{N}}$ of real numbers is said to $I^*$-converge to $\eta\in \mathbb{R}$, if there exists a set $M\in\mathbb F(I)$ where the sequence $x$ ordinary converges to $\eta$.
\end{defn}

%---------------------------------Section 3(i-convergence on time scale )-------------
\section{\textbf{Main Results }}
It is the main section of our paper, we focus on constructing a notion of $I$-convergence and  $I^*$-convergence on time scales. To do this we consider the measurable space $(T,M(m_1^*))$ equipped with Lebesgue $\Delta$-measure $\mu_\Delta$. We first give  some examples of ideals of time scales.
\begin{Example}
  $I=\left\{A\in M(m_1^*) :\mu_\Delta (A)=0\right\}$ is an ideal on a time scale.
\end{Example}
\begin{Example}
$I_\Delta $=$\left\{A\in M(m_1^*):\delta_T(A)=0\right\}$ is an ideal on a time scale.
\end{Example}
\begin{defn}
 An ideal $I$ of a time scale $T$ is said to be $B$-admissible if it contains all bounded subsets of $T$. 
\end{defn}
 \begin{defn}
 Let $f:T\rightarrow \mathbb R$ be a $\Delta$-measurable function. We say that $f$ is $I$-convergent on $T$ to a number
$L$ if for every $\epsilon>0$, 
\begin{center}
$A(\epsilon)=\left\{t\in T:\left|f(t)-L\right|\geq\epsilon\right\}\in I.$
\end{center}
\end{defn}
Consider the ideal $I_\Delta $=$\left\{A\in M(m_1^*):\delta_T(A)=0\right\}.$

Note that when $I=I_\Delta$, $I$-convergence reduced to statistical convergence on time scale $T$.
%.................................theo3.1..............................................
\begin{thm}
Let $f,g:T\rightarrow \mathbb R$ be a $\Delta$-measurable functions and $\alpha \in \mathbb R$. Then 
(1)if $I-lim f(t)=L$ and $I-lim f(t)=M$ then $L=M$.
\\(2)if $I-limf(t)=L$ then $I-lim~\alpha f=\alpha L$.
\\(3)if $I-limf(t)=L$ and $I-lim g(t)=M$ then $I-lim(f+g)=L+M$ and $I-lim(fg)=L M$.
\end{thm}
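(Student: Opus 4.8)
The plan is to reduce each of the three assertions to the two structural properties of the ideal $I$, namely closure under finite unions (condition (ii)) and heredity under subsets (condition (iii)), supplemented for the uniqueness part by the standing non-triviality assumption $T\notin I$ (note that both ideals exhibited above are non-trivial, since $\mu_\Delta(T)=\infty$ and $\delta_T(T)=1$). For (1) I would argue by contradiction. Suppose $L\neq M$ and put $\epsilon=\tfrac{1}{2}|L-M|>0$. By hypothesis both $A=\{t\in T:|f(t)-L|\geq\epsilon\}$ and $B=\{t\in T:|f(t)-M|\geq\epsilon\}$ lie in $I$, hence $A\cup B\in I$ by (ii). If some $t$ were outside $A\cup B$, the triangle inequality would give $|L-M|\leq|f(t)-L|+|f(t)-M|<2\epsilon=|L-M|$, which is absurd; therefore $A\cup B=T$, forcing $T\in I$ and contradicting non-triviality. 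Hence $L=M$.

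The scalar multiple and the sum are routine applications of the same mechanism. For (2) the case $\alpha=0$ is immediate, since $\{t:|\alpha f(t)-0|\geq\epsilon\}=\emptyset\in I$; for $\alpha\neq0$ I would simply rescale, observing that for every $\epsilon>0$
\[
\{t\in T:|\alpha f(t)-\alpha L|\geq\epsilon\}=\Big\{t\in T:|f(t)-L|\geq\tfrac{\epsilon}{|\alpha|}\Big\}\in I
\]
because $\epsilon/|\alpha|>0$ and $f$ is $I$-convergent to $L$. For the sum in (3) I would use the inclusion
\[
\{t:|(f+g)(t)-(L+M)|\geq\epsilon\}\subseteq\Big\{t:|f(t)-L|\geq\tfrac{\epsilon}{2}\Big\}\cup\Big\{t:|g(t)-M|\geq\tfrac{\epsilon}{2}\Big\},
\]
which follows from the triangle inequality; the right-hand side is a union of two $I$-sets, so it lies in $I$ by (ii), and then (iii) places the left-hand side in $I$ as well.

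The product is the step I expect to be the main obstacle, because an $I$-convergent function need not be bounded, so the naive bound $|fg-LM|\leq|f|\,|g-M|+|M|\,|f-L|$ cannot be closed directly. I would instead exploit the decomposition
\[
f(t)g(t)-LM=(f(t)-L)(g(t)-M)+L(g(t)-M)+M(f(t)-L),
\]
which removes the unbounded factor $f(t)$. Given $\epsilon>0$, choose $\delta>0$ so small that $\delta^{2}+(|L|+|M|)\delta<\epsilon$. If $t$ avoids both $\{|f-L|\geq\delta\}$ and $\{|g-M|\geq\delta\}$, the identity above and the triangle inequality give $|f(t)g(t)-LM|<\delta^{2}+(|L|+|M|)\delta<\epsilon$. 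Consequently
\[
\{t:|f(t)g(t)-LM|\geq\epsilon\}\subseteq\{t:|f(t)-L|\geq\delta\}\cup\{t:|g(t)-M|\geq\delta\},
\]
and the right-hand side belongs to $I$ by (ii); a final application of (iii) yields $\{t:|f(t)g(t)-LM|\geq\epsilon\}\in I$, completing the argument.
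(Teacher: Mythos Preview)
Your argument is correct and, for part~(1), essentially identical to the paper's: both derive a contradiction by showing that $\{t:|f(t)-L|\geq\epsilon\}\cup\{t:|f(t)-M|\geq\epsilon\}$ lies in $I$ yet equals $T$, contradicting non-triviality (the paper phrases this as the existence of a point $t_0$ with $f(t_0)$ in the empty intersection of the two $\epsilon$-intervals, which is the same thing). For parts~(2) and~(3) the paper simply declares the proofs trivial and omits them; your treatment is more complete, and in particular your use of the identity $fg-LM=(f-L)(g-M)+L(g-M)+M(f-L)$ to sidestep any boundedness assumption on $f$ is a clean way to handle the product that the paper does not spell out.
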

\begin{proof}
(1) Let $L\neq M$ then there exists an $\epsilon>0$ such that $(L-\epsilon ,L+\epsilon)\cap(M-\epsilon,M+\epsilon)=\phi.$ Since 
\begin{center}
$\left\{t:\left|f(t)-L\right|\geq\epsilon\right\}\in I$
\end{center}
 and
 \begin{center}
 $\left\{t:\left|f(t)-M\right|\geq\epsilon\right\}\in I$
 \end{center}
 we have 
 \begin{center}
 $\left\{t:f(t)\in\left[\left(L-\epsilon,L+\epsilon\right)\cap\left(M-\epsilon,M+\epsilon\right)\right]^c\right\}\subset\left\{t:f(t)\in(L-\epsilon,L+\epsilon)^c\right\}\cup\left\{t:f(t)\in(M-\epsilon,M+\epsilon)^c\right\}\in I$
 \end{center}
	where c stands for complement of the set. Since $I$ is non trivial there exists $t_0\in T$ such that 
 \begin{center}
 $t_0\notin\left\{t:f(t)\in\left[(L-\epsilon,L+\epsilon)\cap(M-\epsilon,M+\epsilon)\right]^c\right\}.$
 \end{center}
But then $f(t_0)\in\left\{(L-\epsilon,L+\epsilon)\cap(M-\epsilon,M+\epsilon)\right\}$ which is a contradiction and hence the theorem.
\end{proof}
The proof of (2) and (3) are trivial so omitted.
\begin{defn}
 Let $f:T\rightarrow\mathbb R$ be a $\Delta $-measurable function, we say that $f$ is $I^*$-convergent on $T$ to a number $L$ if there exists a set 
 $\Omega\in \mathbb F(I) $ such that $\underset{t\rightarrow\infty}{\lim}f(t)=L$, $t\in \Omega$ where 
$\mathbb F(I)=\left\{A\subseteq T:T\setminus A\in I\right\}$.
\end{defn}
%......................................theo3.2...................................................................
\begin{thm}
Let $f$ be a $\Delta $-measurable function then $I^*$-convergence of $f$ implies $I$-convergence of $f$ when $I$ is an $B$-admissible ideal. 
\end{thm}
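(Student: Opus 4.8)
The plan is to mimic the classical proof that $I^*$-convergence implies $I$-convergence, adapting the standard sequence argument to the time-scale setting. Suppose $f$ is $I^*$-convergent to $L$. By definition there exists a set $\Omega \in \mathbb{F}(I)$ such that $\lim_{t \to \infty} f(t) = L$ for $t \in \Omega$. Since $\Omega \in \mathbb{F}(I)$, its complement satisfies $T \setminus \Omega \in I$. The goal is to show that for every $\epsilon > 0$, the set $A(\epsilon) = \{t \in T : |f(t) - L| \geq \epsilon\}$ belongs to $I$.

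\textbf{First} I would translate the ordinary convergence of $f$ on $\Omega$ into a statement about how $A(\epsilon)$ sits inside $T$. Because $\lim_{t \to \infty, \, t \in \Omega} f(t) = L$, for each fixed $\epsilon > 0$ there exists a threshold $t_1 \in T$ such that $|f(t) - L| < \epsilon$ whenever $t \in \Omega$ and $t > t_1$. Consequently any point $t \in A(\epsilon)$ must either lie outside $\Omega$ or satisfy $t \leq t_1$. This yields the key containment
\begin{center}
$A(\epsilon) \subseteq (T \setminus \Omega) \cup \{t \in T : t \leq t_1\}$.
\end{center}

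\textbf{Next} I would argue that the right-hand side belongs to $I$, using the ideal axioms together with the hypotheses. The first piece $T \setminus \Omega$ is in $I$ since $\Omega \in \mathbb{F}(I)$. The second piece $\{t \in T : t \leq t_1\} = [t_0, t_1]_T$ is a bounded subset of $T$, and here the $B$-admissibility of $I$ is essential: by Definition~3.2 it guarantees that all bounded subsets of $T$ lie in $I$. By axiom (ii) of an ideal the union of these two members of $I$ is again in $I$, and then by the hereditary axiom (iii) the subset $A(\epsilon)$ belongs to $I$. Since $\epsilon > 0$ was arbitrary, $f$ is $I$-convergent to $L$.

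\textbf{The main obstacle} I anticipate is purely a matter of bookkeeping about the structure of time scales rather than a deep difficulty: one must confirm that the ``initial segment'' $\{t \in T : t \leq t_1\}$ is genuinely bounded (which holds because $\inf T = t_0 > 0$ is finite) and hence captured by $B$-admissibility, and one must be careful that the convergence as $t \to \infty$ is interpreted along the time scale $T$ with $\sup T = \infty$, so that a cofinal threshold $t_1$ exists. The role of $B$-admissibility is exactly to handle the finitely-many (here, boundedly-many) exceptional points near the start of the time scale that ordinary convergence cannot control; without it the second piece of the union need not lie in $I$, which is precisely why the hypothesis is assumed.
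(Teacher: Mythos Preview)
Your proof is correct and follows essentially the same approach as the paper: the paper sets $B=\Omega\cap[t^*,\infty)$ and shows $A(\epsilon)\subset T\setminus B$ with $B\in\mathbb F(I)$, which is just the filter-side dual of your containment $A(\epsilon)\subseteq (T\setminus\Omega)\cup[t_0,t_1]_T$ together with the ideal closure under finite unions. The only cosmetic difference is that the paper argues via the filter intersection $\Omega\cap[t^*,\infty)\in\mathbb F(I)$ while you argue via the ideal union $(T\setminus\Omega)\cup[t_0,t_1]_T\in I$.
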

\begin{proof}  
Let $\epsilon>0$ be given then there exists a number $t^*\in T$ such that for every $t\geq t^*$ with $t\in \Omega\in\mathbb F(I)$ one can obtain that
 $\left|f(t)-L\right|<\epsilon$. Hence if we put $A(\epsilon)=\left\{ t\in T: \left|f(t)-L\right|\geq\epsilon\right\}$ and $B=\Omega\cap [t^*,\infty)$ then $A(\epsilon)\subset T\setminus B$. Now since $I$ is an $B$-admissible ideal, so $[t_0,t^*)\in I$ and hence $[t^*,\infty)\in\mathbb F(I)$. Therefore $B=\Omega\cap [t^*,\infty)\in F(I)$ so $T\setminus B\in I$. Since $A\subset T\setminus B$ we have $A(\epsilon)\in I$. Therefore $I-limf(t)=L$. Thus $f$ is $I$-convergent.
\end{proof}
\begin{thm}
 A continuous function $g:\mathbb R\rightarrow \mathbb R$ preserves $I$-convergence .
\end{thm}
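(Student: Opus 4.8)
The plan is to read the statement in the standard way: if $f:T\to\mathbb{R}$ is $\Delta$-measurable with $I\text{-}\lim f(t)=L$, then the composite $g\circ f$ is again $I$-convergent on $T$ and $I\text{-}\lim g(f(t))=g(L)$. The whole argument rests on two ingredients already at hand: the continuity of $g$, which is needed only at the single point $L$, and the downward-closure axiom (iii) of an ideal in Definition 2.3, namely that every subset of a member of $I$ is itself a member of $I$.

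First I would fix $\epsilon>0$ and use continuity of $g$ at $L$ to produce $\delta>0$ such that $|x-L|<\delta$ implies $|g(x)-g(L)|<\epsilon$. Passing to the contrapositive gives the pointwise implication $|g(x)-g(L)|\geq\epsilon \Rightarrow |x-L|\geq\delta$, and substituting $x=f(t)$ yields the set inclusion
\[
\{t\in T:|g(f(t))-g(L)|\geq\epsilon\}\subseteq\{t\in T:|f(t)-L|\geq\delta\}.
\]
Since $I\text{-}\lim f(t)=L$, the set on the right belongs to $I$ by Definition 3.2. By axiom (iii) the set on the left, being a subset of a member of $I$, also lies in $I$. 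As $\epsilon>0$ was arbitrary, this is precisely the assertion that $I\text{-}\lim g(f(t))=g(L)$, which finishes the convergence part of the proof.

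The only place that demands genuine care — and hence the main, if modest, obstacle — is to verify that $g\circ f$ is $\Delta$-measurable in the first place, so that Definition 3.2 is even applicable to it. I would dispose of this by the standard observation that the composition of a continuous (hence Borel-measurable) map with a $\Delta$-measurable function is $\Delta$-measurable: for any open $U\subseteq\mathbb{R}$ the set $g^{-1}(U)$ is open, so $(g\circ f)^{-1}(U)=f^{-1}(g^{-1}(U))$ is the $f$-preimage of an open set and therefore $\Delta$-measurable. I would state this explicitly, since the convergence computation above is otherwise purely formal and invokes nothing beyond continuity of $g$ at $L$ together with the ideal axioms.
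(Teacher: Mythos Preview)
Your proof is correct and follows essentially the same route as the paper: use continuity of $g$ at $L$ to obtain an inclusion between the ``bad'' sets for $g\circ f$ and for $f$, then invoke downward closure of the ideal. Your version is in fact more careful than the paper's, which uses the same $\epsilon$ on both sides of the continuity estimate (writing $g((L-\epsilon,L+\epsilon))\subseteq(g(L)-\epsilon,g(L)+\epsilon)$ rather than introducing a separate $\delta$) and does not address the $\Delta$-measurability of $g\circ f$ at all.
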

\begin{proof}
 Let $I-lim f(t)=L$ and $\epsilon>0$ be given. Consider the open interval $(g(L)-\epsilon,g(L)+\epsilon)$. Since $g$ is continuous at $L$ then 
 \begin{center}
 $g((L-\epsilon,L+\epsilon))\subseteq(g(L)-\epsilon,g(L)+\epsilon)$.
 \end{center}
 Clearly 
 \begin{center}
 $\left\{t:g(f(t))\notin(g(L)-\epsilon,g(L)+\epsilon)\right\} \subseteq \left\{t;f(t)\notin (L-\epsilon,L+\epsilon)\right\} $
 \end{center}
	and since
	\begin{center}
	 $\left\{t:f(t)\notin(L-\epsilon,L+\epsilon)\right\}\in I$ 
	\end{center}
	we have
	\begin{center}
	 $\left\{t:g(f(t))\notin(g(L)-\epsilon,g(L)+\epsilon\right\}\in I$.
	\end{center}
	Which shows that $I-lim g(f(t))=L$ i.e $g$ preserves $I$-convergence.
\end{proof}
%....................................theo3.3,,,,,,,,,,,,,,,,,,,,,,,
\begin{thm}
Let $f, g, h$ are all $\Delta$-measurable function on $T$. If $f(t)\leq g(t)\leq h(t)$ for all $t \in T$ and if 
\begin{center}
$I^*-lim f(t)=L=I^*-lim h(t) $
\end{center}
 then $I^*-lim g(t)=L$.
\end{thm}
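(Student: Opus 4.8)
The plan is to combine the two witnessing sets coming from the hypotheses and then run the ordinary sandwich argument on their intersection. By the definition of $I^*$-convergence, the hypothesis $I^*-\lim f(t)=L$ supplies a set $\Omega_1\in\mathbb F(I)$ with $\lim_{t\rightarrow\infty}f(t)=L$ for $t\in\Omega_1$, and likewise $I^*-\lim h(t)=L$ supplies a set $\Omega_2\in\mathbb F(I)$ with $\lim_{t\rightarrow\infty}h(t)=L$ for $t\in\Omega_2$. The first step is to put $\Omega=\Omega_1\cap\Omega_2$ and to argue that this single set will witness the $I^*$-convergence of $g$.

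Second, I would verify that $\Omega\in\mathbb F(I)$. Since $\mathbb F(I)$ is the filter associated with $I$, the filter axiom (ii) gives $\Omega_1\cap\Omega_2\in\mathbb F(I)$ at once; equivalently, $T\setminus\Omega=(T\setminus\Omega_1)\cup(T\setminus\Omega_2)$ is a union of two members of $I$ and hence lies in $I$ by the ideal axiom (ii), so $\Omega\in\mathbb F(I)$.

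Third, because $\Omega\subseteq\Omega_1$ and $\Omega\subseteq\Omega_2$, both limits persist on the smaller domain, i.e. $\lim_{t\rightarrow\infty}f(t)=L$ and $\lim_{t\rightarrow\infty}h(t)=L$ for $t\in\Omega$. Now I would fix $\epsilon>0$, choose $t_1$ so that $|f(t)-L|<\epsilon$ for $t\in\Omega$ with $t\geq t_1$, and choose $t_2$ so that $|h(t)-L|<\epsilon$ for $t\in\Omega$ with $t\geq t_2$. For $t\in\Omega$ with $t\geq\max\{t_1,t_2\}$, the pointwise hypothesis $f(t)\leq g(t)\leq h(t)$ then yields $L-\epsilon<f(t)\leq g(t)\leq h(t)<L+\epsilon$, so $|g(t)-L|<\epsilon$. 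This is exactly the classical squeeze argument, and it gives $\lim_{t\rightarrow\infty}g(t)=L$ for $t\in\Omega$. Since $\Omega\in\mathbb F(I)$, the definition of $I^*$-convergence is met, so $I^*-\lim g(t)=L$.

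The argument is almost entirely formal, so I do not anticipate a serious obstacle. Two points deserve attention. The first is the filter's closure under finite intersection, which is precisely what makes a single common domain $\Omega$ available for all three functions simultaneously. The second is the tacit requirement that $\Omega$ be unbounded, so that the limit ``$t\rightarrow\infty$ within $\Omega$'' is non-vacuous; for the ideals of interest here, such as the zero $\Delta$-measure ideal and the density-zero ideal $I_\Delta$ introduced above, no member can contain a tail $[t^*,\infty)_T$, so the complement $\Omega$ necessarily retains arbitrarily large points and the sandwich can indeed be carried out there.
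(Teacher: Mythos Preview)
Your argument is correct and follows the same route as the paper: obtain $\Omega_1,\Omega_2\in\mathbb F(I)$ from the two hypotheses, intersect to get $\Omega\in\mathbb F(I)$, and run the ordinary squeeze on $\Omega$. Your version is in fact slightly cleaner in that you fix $\Omega=\Omega_1\cap\Omega_2$ \emph{before} introducing $\epsilon$, which is the correct order of quantifiers for $I^*$-convergence; the paper's proof begins with ``Let $\epsilon>0$'' and only then produces $\Omega_1,\Omega_2$, leaving the independence of $\Omega$ from $\epsilon$ implicit. Your closing remark on the unboundedness of $\Omega$ also addresses a point the paper leaves unspoken.
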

\begin{proof}
 Let $\epsilon > 0$ be given. Then there exist $\Omega_1,\Omega_2 \in \mathbb F(I)$ and $t_1^*,t_2^*\in T$ such that  for all $t\geq t_1^* $ and $t\in \Omega _1$
 \begin{center}
	$L-\epsilon<f(t)<L+\epsilon$
 \end{center}
	and for all $t\geq t_2^*$ and $t\in \Omega_2$
	\begin{center}
	 $L-\epsilon<h(t)<L+\epsilon$.
	\end{center}
	But $\phi \notin F(I)$ implies $\Omega_1\cap\Omega_2\neq \phi$ hence for all $t\geq t^* =Max\left\{t_1^*,t_2^*\right\} $ and $t\in \Omega_1\cap\Omega_2\in \mathbb F(I)$ we have
	\begin{center}
	 $L-\epsilon<f(t)<L+\epsilon$
	\end{center}
	 and
	 \begin{center}
		$L-\epsilon<h(t)<L+\epsilon$ 
	 \end{center}
	then 
	\begin{center}
	$L-\epsilon<f(t)\leq g(t)\leq h(t)<L+\epsilon$
	\end{center}
	hence 
		\begin{center}
		$I^*-limg(t)=L$.
		\end{center}
Which completes the proof.		 
\end{proof}
\begin{corr}
 Let $I$ be a $B$-admissible ideal and let $f, g, h$ are all $\Delta$-measurable function on $T$. If $f(t)\leq g(t)\leq h(t)$ for all $t \in T$ and if 
\begin{center}
$I-lim f(t)=L=I-lim h(t) $
\end{center}
 then $I-lim g(t)=L$.
\end{corr}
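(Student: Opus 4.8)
The plan is to argue directly from the ideal axioms, exactly as in Definition 3.2, rather than routing through the $I^{*}$-squeeze theorem (Theorem 3.4): since $I$-convergence of $f$ and $h$ does not by itself produce a single set $\Omega\in\mathbb F(I)$ on which both $f$ and $h$ converge in the ordinary sense, Theorem 3.4 is not directly applicable. So first I would fix $\epsilon>0$ and record the two hypotheses in the form
\[
A_{f}:=\{t\in T:|f(t)-L|\geq\epsilon\}\in I,\qquad A_{h}:=\{t\in T:|h(t)-L|\geq\epsilon\}\in I.
\]
The goal is to prove that $A_{g}:=\{t\in T:|g(t)-L|\geq\epsilon\}\in I$; note that $A_{g}$ is $\Delta$-measurable because $g$ is, so its membership in $I$ is meaningful.

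The key step is to convert the pointwise sandwich into a set inclusion. On the complement $T\setminus(A_{f}\cup A_{h})$ one has simultaneously $|f(t)-L|<\epsilon$ and $|h(t)-L|<\epsilon$, that is $L-\epsilon<f(t)$ and $h(t)<L+\epsilon$; combining these with $f(t)\leq g(t)\leq h(t)$ yields $L-\epsilon<f(t)\leq g(t)\leq h(t)<L+\epsilon$, so that $|g(t)-L|<\epsilon$. Passing to complements, this is exactly the inclusion
\[
A_{g}\subseteq A_{f}\cup A_{h}.
\]
I would then conclude using only the ideal structure of Definition 2.3: by condition (ii), $A_{f}\cup A_{h}\in I$, and by condition (iii) the subset $A_{g}$ lies in $I$ as well. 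Since $\epsilon>0$ was arbitrary, Definition 3.2 gives $I\text{-}\lim g(t)=L$.

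I do not expect any genuine obstacle: the entire content is the inclusion $A_{g}\subseteq A_{f}\cup A_{h}$, after which the finite-union and downward-closure axioms finish the argument at once. The same reasoning can be phrased through the associated filter $\mathbb F(I)$, since the sets $\{t:|f(t)-L|<\epsilon\}$ and $\{t:|h(t)-L|<\epsilon\}$ belong to $\mathbb F(I)$, hence so does their intersection, which is contained in $\{t:|g(t)-L|<\epsilon\}$, forcing the latter into $\mathbb F(I)$. It is worth noting that $B$-admissibility is not actually used in this direct proof; it would be relevant only if one insisted on deriving the corollary from Theorem 3.4 together with Theorem 3.2. Consequently the conclusion in fact holds for every non-trivial ideal $I$.
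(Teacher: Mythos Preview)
Your argument is correct. The paper gives no explicit proof; by positioning the statement as a corollary of Theorem~3.4 and adding the $B$-admissibility hypothesis, the authors evidently intend it to be deduced from the $I^{*}$-squeeze theorem together with Theorem~3.2 ($I^{*}$-convergence $\Rightarrow$ $I$-convergence under $B$-admissibility). As you observe, that intended route is actually incomplete: to feed $f$ and $h$ into Theorem~3.4 one would first need $I^{*}$-convergence of $f$ and $h$, which does not follow from $I$-convergence without a further hypothesis such as condition~(BAP). Your direct proof via the inclusion $A_{g}\subseteq A_{f}\cup A_{h}$ and the ideal axioms bypasses this entirely, is self-contained, and shows that the $B$-admissibility assumption is superfluous; so your approach is both more elementary and strictly more general than what the paper suggests.
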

\begin{defn}
  An $B$-admissible ideal $I$ is said to satisfy the condition $(BAP)$ if for every countable family of mutually disjoint sets 
	\begin{center}
	$\left\{A_1;A_2; : : : \right\}$
	\end{center}
	 belonging to $I$ there exists a countable family of sets 
	 \begin{center}
	 $\left\{B_1;B_2; : : : \right\}$
	 \end{center}
		from $T$ such that for each $j=1,2,..$ $A_j\Delta B_j$ are bounded and $\bigcup_{j=1}^\infty B_j \in I$
\end{defn}
Note here $A_j\Delta B_j$ means symmetric difference between them.
%.........................................theo3.5.................................
\begin{thm}
 If an $B$-admissible ideal satisfy the condition $(BAP)$ then $I$-convergence of a $\Delta$-measurable function $f$ implies $I^*$-convergence of $f$.
\end{thm}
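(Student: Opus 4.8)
The plan is to adapt the classical Kostyrko--Šalát--Wilczyński argument (the proof that condition $(AP)$ makes $I$-convergence imply $I^*$-convergence for sequences) to the time-scale setting, replacing ``finite'' by ``bounded'' throughout. Assume $I-lim\, f(t)=L$. The idea is to slice the complement of the ``good set'' into a countable mutually disjoint family of members of $I$, feed this family to $(BAP)$, and use the resulting sets to build a single $\Omega\in\mathbb F(I)$ along which $f$ converges to $L$ in the ordinary sense.

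Concretely, first I would define the mutually disjoint $\Delta$-measurable sets $A_1=\{t\in T:|f(t)-L|\ge 1\}$ and, for $p\ge 2$, $A_p=\{t\in T:\tfrac{1}{p}\le |f(t)-L|<\tfrac{1}{p-1}\}$. Each $A_p$ is a subset of $\{t:|f(t)-L|\ge 1/p\}$, which lies in $I$ by the assumed $I$-convergence, so $A_p\in I$ by the hereditary property (iii) of ideals. Since $\{A_1,A_2,\dots\}$ is a countable mutually disjoint family in $I$, condition $(BAP)$ supplies sets $B_1,B_2,\dots$ with each $A_j\Delta B_j$ bounded and $B:=\bigcup_{j=1}^\infty B_j\in I$. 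Setting $\Omega:=T\setminus B$ gives $\Omega\in\mathbb F(I)$, and this is the candidate set witnessing $I^*$-convergence.

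The heart of the argument is verifying $\lim_{t\to\infty,\,t\in\Omega}f(t)=L$. Given $\epsilon>0$, choose $p$ with $1/p<\epsilon$; then $\{t:|f(t)-L|\ge\epsilon\}\subseteq A_1\cup\cdots\cup A_p$, since these sets exhaust $\{t:|f(t)-L|\ge 1/p\}$. For $t\in\Omega$ we have $t\notin B_j$ for every $j$, so if additionally $t\in A_j$ with $j\le p$ then $t\in A_j\setminus B_j\subseteq A_j\Delta B_j$, a bounded set. Hence $(A_1\cup\cdots\cup A_p)\cap\Omega$, being a finite union of bounded sets, is bounded; since $\sup T=\infty$ there is a $t^*\in T$ beyond which $\Omega$ avoids $A_1\cup\cdots\cup A_p$ entirely. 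Thus for $t\in\Omega$ with $t\ge t^*$ we obtain $|f(t)-L|<1/p<\epsilon$, which is exactly ordinary convergence of $f$ to $L$ along $\Omega$, i.e.\ $I^*$-convergence.

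I expect the main obstacle to be this bounded-tail step. Although $(BAP)$ only controls each $A_j\Delta B_j$ individually, restricting to $\Omega=T\setminus B$ simultaneously kills all the $B_j$, so on $\Omega$ each $A_j$ (for $j\le p$) survives only inside the bounded set $A_j\setminus B_j$. The order of quantifiers is what makes this work: fixing $\epsilon$ first and only then the finitely many indices $j\le p$ keeps the relevant union finite, hence bounded. It is precisely here that $\sup T=\infty$ is needed, to convert a ``bounded bad set'' into a genuine threshold $t^*$ and thereby produce an honest ordinary limit along $\Omega$.
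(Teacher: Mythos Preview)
Your proposal is correct and follows essentially the same route as the paper: the same disjoint decomposition $A_1,A_2,\dots$, the same invocation of $(BAP)$ to obtain $B=\bigcup B_j\in I$, and the same tail argument showing ordinary convergence along $\Omega=T\setminus B$. The only cosmetic difference is that where you argue $(A_1\cup\cdots\cup A_p)\cap\Omega\subseteq\bigcup_{j\le p}(A_j\setminus B_j)$ is bounded, the paper phrases the same step as the tail equality $\bigl(\bigcup_{j\le p}B_j\bigr)\cap\{t>t^*\}=\bigl(\bigcup_{j\le p}A_j\bigr)\cap\{t>t^*\}$; both encodings of ``bounded symmetric difference'' give the desired threshold $t^*$.
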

\begin{proof}
Let $I-lim f(t)=L$. Then 
\begin{center}
$A(\epsilon)=\left\{t\in T :\left|f(t)-L\right|\geq\epsilon\right\}\in I$.
\end{center}
 Put 
 \begin{center}
 $A_1=\left\{t\in T:\left|f(t)-L\right|\geq 1 \right\}$
 \end{center}
	and 
	\begin{center}
	$A_n=\left\{t\in T :\frac{1}{n}\leq\left|f(t)-L\right|<\frac{1}{n-1}\right\} $ for $ n\geq 2$.
	\end{center}
	Obviously $A_i\cap A_j=\phi$ for $i\neq j$. By condition $(BAP)$ there exists a sequence of sets $ (B_n)_{n\in \mathbb N}$ such that $A_j\Delta B_j$ are bounded sets and $B=\bigcup_{i=1}^\infty B_j\in I$. It is sufficient to prove that for $t\in T\setminus B=M\in\mathbb F(I)$
		\begin{equation}
		\underset{t\rightarrow\infty}{\lim}lim f(t)=L.~~~~~~~~~~~~~~~~~~~~~~~~~~~~~~~~~~~~~~~~~~~~~~~~~~
	\end{equation}	
		 Let $\eta\geq 0$. Choose $k\in \mathbb N$ such that $\frac{1}{k+1}<\eta$. Then 
	\begin{center}
	$\left\{t\in T :\left|f(t)-L\right|\geq\eta\right\}\subset\cup_{j=1}^{k+1}A_j$.
	\end{center}
	Since $A_j\Delta B_j$ for $j=1,2,....k+1$ are bounded sets so  there exists $ t^*\in T$ such that
	\begin{equation}
	(\bigcup_{j=1}^{k+1}B_j)\cap\left\{t\in T : t>t_0^*\right\}=(\bigcup_{j=1}^{k+1}A_j)\cap\left\{t\in T : t>t_0^*\right\}~~~~~~~~~~~~~.
	\end{equation}
	If $t>t_0^*$ and $t\notin B$, then $t\notin \bigcup_{j=1}^{k+1}B_j$ and by (2) $t\notin \bigcup_{j=1}^{k+1}A_j$. But then $\left|f(t)-L\right|<\frac{1}{k+1}<\eta$. So $I^*-lim f(t)=L$. 
\end{proof}
\begin{defn}
 A $\Delta$-measurable function $f$ on time scales $T$ is said to be $I$-Cauchy if for every $\epsilon>0$, there exists a $t_1>t_0$
such that 
\begin{center}
$\left\{t:\left|f(t)-f(t_1)\right|\geq\epsilon \right\}\in I$.
\end{center}
 %.......................................theo3.6........................................
\end{defn}
\begin{thm} 
If $f:T\rightarrow \mathbb R$ a $\Delta$-measurable function then $I$-convergent of $f$ implies $I$-Cauchy.
\end{thm}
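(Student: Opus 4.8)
The plan is to mimic the classical argument that a convergent sequence is Cauchy, transported to the ideal and time-scale setting through the triangle inequality together with the hereditary property of ideals. Assume $I$-$\lim f(t)=L$ and fix $\epsilon>0$. The first step is to apply the definition of $I$-convergence with $\epsilon/2$ in place of $\epsilon$, recording that
\begin{center}
$A(\epsilon/2)=\{t\in T:|f(t)-L|\geq \epsilon/2\}\in I.$
\end{center}

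Next I would pass to the associated filter to produce the reference point required by the definition of $I$-Cauchy. Since $I$ is non-trivial we have $\phi\notin\mathbb F(I)$, so the complement $M=T\setminus A(\epsilon/2)\in\mathbb F(I)$ is non-empty; I would select an anchor $t_1\in M$ with $t_1>t_0$, for which $|f(t_1)-L|<\epsilon/2$. The key inclusion then follows from the triangle inequality: if $|f(t)-f(t_1)|\geq\epsilon$, then $|f(t)-L|\geq |f(t)-f(t_1)|-|f(t_1)-L|>\epsilon-\epsilon/2=\epsilon/2$, whence
\begin{center}
$\{t\in T:|f(t)-f(t_1)|\geq\epsilon\}\subseteq A(\epsilon/2).$
\end{center}
Because $A(\epsilon/2)\in I$ and an ideal is closed under taking subsets (the hereditary property (iii) in the definition of an ideal), the smaller set also lies in $I$. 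As $\epsilon>0$ was arbitrary, this is exactly the statement that $f$ is $I$-Cauchy.

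The substantive content above is just the inclusion coming from the $\epsilon/2$-estimate, which is routine. The only delicate point, and the one I expect to be the main (mild) obstacle, is the legitimacy of choosing the anchor strictly above $t_0=\inf T$: the filter guarantees only that $M$ is non-empty, not that it meets $(t_0,\infty)_T$. I would resolve this by observing that for the ideals relevant here the single-point set $\{t_0\}$ causes no trouble (for instance, for a $B$-admissible ideal every bounded set, and in particular $\{t_0\}$, belongs to $I$, so $M\setminus\{t_0\}$ is again in $\mathbb F(I)$ and hence non-empty), which yields an admissible $t_1>t_0$ without affecting the estimate. Everything else reduces to the triangle-inequality computation and the downward closure of $I$.
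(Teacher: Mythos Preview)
Your argument is essentially identical to the paper's: apply $I$-convergence with $\epsilon/2$, pick $t_1$ in the complement of $A(\epsilon/2)$, and use the triangle inequality plus downward closure of $I$ to conclude $\{t:|f(t)-f(t_1)|\geq\epsilon\}\subset A(\epsilon/2)\in I$. In fact you are slightly more careful than the paper, which simply asserts the choice of $t_1>t_0$ without the justification you supply via $B$-admissibility.
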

\begin{proof}
Let $\epsilon>0$ be given and also let $I-lim f(t)=L$. Then 
\begin{center}
$A=\left\{t\in T:\left|f(t)-L\right|\geq \frac{\epsilon}{2}\right\}\in I$.
\end{center}
Thus for all $t\notin A$ we have
\begin{center}
 $\left|f(t)-L\right|<\frac{\epsilon}{2}$.
\end{center}
 Choose  an element $t_1\in T\setminus A$ and $t_1>t_0$ implies 
 \begin{center}
 $\left|f(t_1)-L\right|<\frac{\epsilon}{2}$.
 \end{center}
Thus for all $t\notin A$ and $t_1>t_0$ we have 
\begin{center}
$\left|f(t)-f(t_1)\right|\leq\left|f(t)-L\right|+\left|f(t_1)-L\right|<\frac{\epsilon}{2}+\frac{\epsilon}{2}$.
\end{center}
Let $B=\left\{t\in T:\left|f(t)-f(t_1)\right|\geq\epsilon\right\}.$ Then clearly $B\subset A$ and since $A\in I$ we have $B\in I$. This completes the proof.
\end{proof}
In our final theorem we show that in some special condition a $I$-Cauchy $\Delta$-measurable function is $I$-convergent. Before that we first introduce a definition.
%.......................................defn......................
\begin{defn}
$L\in \mathbb R$ is called $I$-cluster point of a $\Delta$-measurable function $f$ if for every $\epsilon >0$ 
\begin{center}
$\left\{t:\left|f(t)-L\right|<\epsilon \right\}\notin I$.
\end{center}
\end{defn}
%..............................................them.....................3.7....................
\begin{thm}
If a $I$-Cauchy $\Delta$-measurable function has a $I$-cluster point then the function is $I$-convergent .
\end{thm}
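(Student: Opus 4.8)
The plan is to prove that $f$ is $I$-convergent precisely to the given $I$-cluster point $L$. Fix an arbitrary $\epsilon > 0$. Since $f$ is $I$-Cauchy, the first step is to apply the Cauchy condition with tolerance $\epsilon/3$ to obtain a point $t_1 > t_0$ such that the ``bad'' set $A = \{t : |f(t) - f(t_1)| \geq \epsilon/3\}$ belongs to $I$. The whole argument will then consist in showing that $\{t : |f(t) - L| \geq \epsilon\}$ is contained in $A$, so that the ideal property forces it into $I$.

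The key step is to use the cluster-point hypothesis to pin $f(t_1)$ near $L$. Since $L$ is an $I$-cluster point, the set $C = \{t : |f(t) - L| < \epsilon/3\}$ does not belong to $I$. I would then claim that $C \cap (T \setminus A) \neq \phi$. Indeed, if this intersection were empty, then $C \subseteq A$; but $A \in I$, so the ideal property (iii) of Definition~2.3 would give $C \in I$, contradicting that $L$ is an $I$-cluster point. Hence there is a point $t'$ simultaneously satisfying $|f(t') - L| < \epsilon/3$ and $|f(t') - f(t_1)| < \epsilon/3$, and the triangle inequality yields $|f(t_1) - L| < 2\epsilon/3$.

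Finally I would propagate this single estimate on $f(t_1)$ to all $t \notin A$. For every such $t$ we have $|f(t) - f(t_1)| < \epsilon/3$, whence $|f(t) - L| \leq |f(t) - f(t_1)| + |f(t_1) - L| < \epsilon/3 + 2\epsilon/3 = \epsilon$. Consequently $\{t : |f(t) - L| \geq \epsilon\} \subseteq A$, and since $A \in I$, the ideal property shows $\{t : |f(t) - L| \geq \epsilon\} \in I$. As $\epsilon > 0$ was arbitrary, this is exactly the statement that $I\text{-}\lim f(t) = L$, so $f$ is $I$-convergent.

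The only genuinely substantive point is the nonemptiness claim $C \cap (T \setminus A) \neq \phi$, which is where the cluster-point hypothesis is essential and where the non-triviality of the ideal is implicitly used; everything else is routine $\epsilon/3$ triangle-inequality bookkeeping that mirrors the classical sequence argument of Kostyrko et al.\ [13]. I therefore expect no real obstacle beyond arranging the three estimates in the correct order and keeping the tolerances consistent.
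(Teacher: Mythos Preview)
Your proof is correct and follows essentially the same approach as the paper: use the $I$-Cauchy condition to get a set $A\in I$ around a reference point, intersect its complement with the cluster-point set to pin the reference value near $L$, and then propagate via the triangle inequality to conclude $\{t:|f(t)-L|\geq\epsilon\}\subseteq A$. The only cosmetic difference is that the paper uses an $\epsilon/4$ tolerance and routes through an auxiliary point $t_2$ (yielding $|f(s)-f(t_1)|<\epsilon/2$ for any two points $s,t_1\notin A$), whereas your direct $\epsilon/3$ split is slightly cleaner.
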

\begin{proof}
Let a function $f$ be a $\Delta$-measurable function on a time scales $T$ and has a $I$-cluster point $L$ and be $I$-Cauchy. Let $\epsilon>0$ be given, we will show that
\begin{center}
 $\left\{t:\left|f(t)-L\right|\geq\epsilon\right\}\in I$.
\end{center}
Since $f$ is $I$-Cauchy  there exists $t_2\in T$ such that
\begin{center}
 $\left\{t:\left|f(t)-f(t_2)\right|\geq \frac{\epsilon}{4}\right\}\in I$ 
\end{center}
i.e. 
\begin{center}
$\left\{t:\left|f(t)-f(t_2)\right|<\frac{\epsilon}{4}\right\}\in \mathbb F(I)$.
\end{center}
Setting $A=\left\{t:\left|f(t)-f(t_2)\right|\geq\frac{\epsilon}{4}\right\}$ we have $A\in I$ and $s,t_1\notin A$ implies 
\begin{center}
$\left|f(s)-f(t_2)\right|<\frac{\epsilon}{4}$ and $\left|f(t_1)-f(t_2)\right|<\frac{\epsilon}{4}$ 
\end{center}
which implies $\left|f(s)-f(t_1)\right|<\frac{\epsilon}{2}$. Let $B=\left\{t:\left|f(t)-L\right|<\frac{\epsilon}{2}\right\}$. Since $L$ is a $I$-cluster point 
so it follows that $B\notin I$ and $A^c\in \mathbb F(I)$, then clearly $B\cap A^c\neq\phi$. Choose $s\in B\cap A^c$ then 
\begin{center}
$\left|f(s)-L\right|<\frac{\epsilon}{2}$ and $\left|f(s)-f(t_1)\right|<\frac{\epsilon}{2}$
\end{center}
for arbitrary $t_1\in A^c$. So $\left|f(t_1)-L\right|<\epsilon$ thus $t_1\in \left\{t:\left|f(t)-L\right|<\epsilon \right\}$ this shows that $A^c\subset \left\{t:\left|f(t)-L\right|<\epsilon \right\}$. Since $A^c\in \mathbb F(I)$,
\begin{center}
 $\left\{t:\left|f(t)-L\right|<\epsilon \right\}\in \mathbb F(I)$
\end{center}
 i.e
 \begin{center}
	$\left\{t:\left|f(t)-L\right|\geq\epsilon \right\}\in I$.
 \end{center}
 Therefore $f$ is $I$-convergent to $L.$
\end{proof}

\noindent\textbf{Acknowledgement:} The authors are grateful to
Dr. Prasanta Malik for his advice during the preparation of
this paper. The work of the first author was supported by University Grant Commission, New Delhi, India under UGC JRF in Mathematical Sciences. \\

%--------------------------------------------------REFERENCES------------------------------

\end{document}